\numberwithin{equation}{section}
\newtheorem{lemma}{Lemma}[section]
\newtheorem{theorem}[lemma]{Theorem}
\newtheorem{proposition}[lemma]{Proposition}
\newtheorem{corollary}[lemma]{Corollary}
\theoremstyle{remark}
\newtheorem{ex}[lemma]{Example}
\theoremstyle{definition}
\def\U{\mathrm{U}}
\def\GL{\mathrm{GL}}  
\def\Or{\mathrm{O}}  
\def\AGL{\mathrm{AGL}}           
\def\char{{\rm char\,}}
\def\Ker{{\rm Ker\,}}
\def\diag{\mathrm{diag}}
\def\T{\mathop{\rm T}\nolimits}
\newcommand{\F}{\mathbb{F}}    
 \newcommand{\C}{\mathbf{C}}    
\newcommand{\Tr}{\mathcal{T}}    
 \newcommand{\Z}{\mathbf{Z}}
\begin{document}

\title[Regular subgroups of the affine group with no translations]{Regular subgroups of the affine group \\ with no 
translations}

\author{M.A. Pellegrini}
\email{marcoantonio.pellegrini@unicatt.it}

\author{M.C. Tamburini Bellani}
\email{mariaclara.tamburini@gmail.com}

\address{Dipartimento di Matematica e Fisica, Universit\`a Cattolica del Sacro Cuore,\\
Via Musei 41, I-25121 Brescia, Italy}

\keywords{Regular subgroup,  affine group, translations.}
\subjclass[2010]{20B35, 15A63, 15A21.}

\begin{abstract}
Given a regular subgroup $R$ of $\AGL_n(\F)$, one can ask if $R$ contains nontrivial translations.
A negative answer to this question was given by Liebeck, Praeger and Saxl for $\AGL_2(p)$ ($p$ a prime), $\AGL_3(p)$ 
($p$ 
odd) and for 
$\AGL_4(2)$. A positive answer was given by Heged\H{u}s for $\AGL_n(p)$ when $n\geq 4$ if $p$ is odd and for 
$n=3$ or $n\geq 5$ if $p=2$. 
A first generalization to finite fields of Heged\H{u}s' construction was recently obtained by Catino, Colazzo and 
Stefanelli.
In this paper we give examples of such subgroups in $\AGL_n(\F)$ for any $n \geq  5$ and any
field $\F$. 
For $n < 5$ we provide necessary and sufficient conditions for their existence, assuming $R$ to be unipotent if $\char 
\F=0$.
\end{abstract}

\maketitle

\section{Introduction}

Consider the affine group 
$$\AGL_n(\F)=\left\{\begin{pmatrix}
1 & v \\ 0 & A  \end{pmatrix} : v \in \F^n,\; A \in \GL_n(\F) 
 \right\}$$
acting on the right on the row vector space $\F^{n+1}$,
whose canonical basis will be denoted by $\{e_0,e_1,\ldots,e_n\}$.
Furthermore, denote by
$$\pi: \AGL_n(\F) \rightarrow \GL_n(\F)$$
the obvious epimorphism $\begin{pmatrix}
1 & v \\ 0 & A  \end{pmatrix} \mapsto A$, whose kernel is the translation
group $\Tr$.
A subgroup $R$ of $\AGL_n(\F)$ is called \emph{regular} if it acts
regularly on the set of the affine points: namely if, for every $v\in \F^n$, there exists a unique element of $R$ having the affine point $(1,v)$ as first row.

The problem of the existence of regular subgroups of $\AGL_n(\F)$ having no translations
other than the identity was first raised by Liebeck, Praeger and Saxl in \cite{LPS}.
Clearly, for such subgroups we have
\begin{equation}\label{piR}
R\cong \pi(R). 
\end{equation}
In the case of fields $\F=\F_p$ of prime order $p$, the above-mentioned authors also proved that no such regular subgroups exist for $\AGL_2(p)$, any $p$, $\AGL_3(p)$, $p>2$, and for $\AGL_4(2)$.
The  first positive examples, which proved their existence, were constructed by 
Heged\H{u}s in \cite{H2000}, in the case $\F=\F_p$. 
More precisely, he proved that $\AGL_n(p)$ contains a regular subgroup having no translations other than  the identity, whenever (\emph{i})  $n=3$ or $n\geq 5$, if $p=2$ or (\emph{ii}) $n\geq 4$, if $p>2$.
The crucial property
that he used is the existence of a non-degenerate quadratic form over $\F_p^{n-1}$ and an embedding
of the additive group $(\F_p,+)$ into the corresponding orthogonal group. 
Clearly this property holds for much more general fields than $\F_p$. This fact was recently  used
by Catino, Colazzo and  Stefanelli who extended Heged\H{u}s' result to $\F=\F_{p^\ell}$, \cite{CCS2}.

In Section \ref{neg} we extend to an arbitrary field $\F$ the negative results of \cite{LPS}, giving an independent proof of the following facts:

\begin{theorem}\label{negm}
Let $R$ be a regular subgroup of $\AGL_n(\F)$ and suppose that $R$ is unipotent if $\char \F=0$.
Assume that one of the following conditions holds:
\begin{itemize}
\item[\emph{(i)}] $n\leq 2$;
\item[\emph{(ii)}] $n=3$ and  $\F\neq \F_2$; 
\item[\emph{(iii)}] $n=4$ and $\char \F=2$.
\end{itemize}
Then $R$ contains a nontrivial translation.
\end{theorem}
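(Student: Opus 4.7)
The plan is to parametrize the elements of $R$ as
\[
\mu(v)=\begin{pmatrix} 1 & v \\ 0 & A(v)\end{pmatrix},\qquad v\in\F^n,
\]
the bijection $v\mapsto\mu(v)$ coming from regularity. The no-translation hypothesis is equivalent to saying that $\pi|_R:R\to\pi(R)\le\GL_n(\F)$ is an isomorphism, so in particular $|\pi(R)|=|R|=|\F|^n$. Expanding the matrix identity $\mu(v)\mu(w)=\mu(v\oplus w)$ yields the explicit group law
\[
v\oplus w = w+vA(w),\qquad A(v\oplus w)=A(v)A(w),
\]
which will be the workhorse throughout.

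The next step is to show that $\pi(R)$ is a unipotent subgroup of $\GL_n(\F)$: this is hypothesis when $\char\F=0$, and automatic when $\char\F=p>0$ (at least for $\F$ finite), since then $|R|=|\F|^n$ is a power of $p$, so $R$ is a $p$-group. Conjugating in $\AGL_n(\F)$ by a suitable $\diag(1,P)$, Kolchin's theorem lets me assume $\pi(R)$ is contained in the upper unitriangular group $U_n$, of order $|\F|^{n(n-1)/2}$.

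Case \emph{(i)} is then immediate: for $n\leq 2$ one has $n(n-1)/2<n$, so $|\pi(R)|\le|U_n|<|\F|^n$, a contradiction. In case \emph{(ii)}, $|U_3|=|\F|^3=|\pi(R)|$ forces $\pi(R)=U_3$; writing $A(v)=I+a(v)E_{12}+b(v)E_{23}+c(v)E_{13}$ converts $A(v\oplus w)=A(v)A(w)$ into the coupled functional system
\begin{align*}
a(v\oplus w)&=a(v)+a(w),\\
b(v\oplus w)&=b(v)+b(w),\\
c(v\oplus w)&=c(v)+c(w)+a(v)b(w),
\end{align*}
together with the explicit expression of $v\oplus w$ in terms of $v$, $w$ and $a,b,c$. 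The strategy is to combine the bijectivity of $v\mapsto(a(v),b(v),c(v))$ with this system to isolate an additive bijection $\F\to\F$ subject to a multiplicative identity, and to show no such map exists on a field strictly larger than $\F_2$. Case \emph{(iii)} is analogous but one level higher: $\pi(R)$ is a proper subgroup of $U_4$ of index $|\F|^2$, and a similar coordinate analysis uses the additivity of squaring in characteristic $2$ to force an analogous obstruction.

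The main difficulty lies in cases \emph{(ii)} and \emph{(iii)}: the resulting functional equations are bulky, and one has to patiently exploit bijectivity of $v\mapsto A(v)$ against the intrinsic cocycle/commutator structure of $U_n$ in order to pinpoint where the arithmetic of $\F$ enters. I expect most of the proof's length to be absorbed by this case-by-case bookkeeping.
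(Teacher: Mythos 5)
There are two genuine gaps here, one global and one local. The global one: your argument is really a proof for \emph{finite} fields only, while the theorem is stated for arbitrary $\F$ --- and indeed the char $0$ case (where $R$ is assumed unipotent) concerns exclusively infinite fields. Every counting step collapses when $\F$ is infinite: $|\U_n(\F)|<|\F|^n$ is false (both cardinals equal $|\F|$ for $n\geq 2$), so case (i) is not settled; ``$|\pi(R)|=|\U_3(\F)|$ forces $\pi(R)=\U_3(\F)$'' fails, so in case (ii) you cannot assume surjectivity onto $\U_3$; and ``proper subgroup of index $|\F|^2$'' in case (iii) is meaningless. Likewise, your unipotence argument in positive characteristic ($|R|=|\F|^n$ a $p$-power, hence $R$ a $p$-group) only works for finite $\F$, as you yourself flag; for infinite fields of characteristic $p$ this is a nontrivial theorem, which the paper imports from \cite[Theorem 3.2]{Ischia}. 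The paper's replacement for your counting is structural: for $n\leq 2$ it uses that $\U_2(\F)$ is abelian together with the lemma that \emph{every abelian regular subgroup contains nontrivial translations} (\cite[Lemma 5.2]{Ischia}) --- a key ingredient absent from your proposal, without which even $n=2$ over $\Q$ is out of reach.

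The local gap: cases (ii) and (iii), which you correctly identify as the heart of the matter, are announced rather than executed (``the strategy is to\ldots'', ``I expect\ldots''). You have not shown that your functional system actually isolates the claimed additive map with a multiplicative constraint, nor derived the arithmetic obstruction. The paper does this work, and not by brute-force bookkeeping in $\U_n$: it first invokes \cite[Theorem 4.4]{PT} to place a central element of $R$ into one of finitely many Jordan forms, eliminates all but one form ($\diag(J_2,J_2)$ for $n=3$, $\diag(J_2,J_2,J_1)$ for $n=4$, char $2$) by ad hoc arguments (abelian centralizers, squares landing in $\Tr$, order considerations), and only then writes $R$ inside the centralizer of that element, where commutator identities such as $[r_{(\tilde a(x)^{-1},0,0)},t_x]z=I_4+2E_{1,3}+\frac{\tilde b(x)+1}{\tilde a(x)}E_{1,4}$ force a function to be additive and identically $1$ on nonzero arguments, yielding $1+1=\tilde b(1)+\tilde b(w)=\tilde b(1+w)=1$ whenever $|\F|>2$. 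This normalization via the center is what makes the computation finite and field-independent; your plan, even restricted to finite fields, would still have to reconstruct essentially this analysis, and for $n=4$ it also needs the nontrivial intermediate steps (e.g.\ killing $\alpha_4,\alpha_5$ and the excursion through the stabilizer of a hyperplane, which feeds back into the $n=3$ case) before any ``additivity of squaring'' argument can bite.
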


On the other hand, using  Heged\H{u}s' method, we generalize \cite{CCS2} proving the following result. 

\begin{theorem}\label{main} 
Let  $\F$ be any field  and $W$ be a subspace of $\F$, viewed as a vector space over its prime field $\F_0$.
Assume that one of the following conditions holds:
\begin{itemize}
\item[\emph{(i)}] $n=3$ and  $\F=\F_2$;
\item[\emph{(ii)}] $n\ge 4$ and  $\char \F\ne 2$;
\item[\emph{(iii)}] $n\geq 5$ and $\char \F=2$.
\end{itemize}
Then, there exists a regular subgroup $R_W$ of $\AGL_n(\F)$ such that $R_W\cap \Tr\cong (W,+)$. 
In particular there exists a regular subgroup $R_{\{0\}}$ such that $R_{\{0\}} 
\cap \Tr=\{I_{n+1}\}$.
\end{theorem}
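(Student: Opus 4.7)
My plan is to construct $R_W$ explicitly, generalising the method of Heged\H{u}s \cite{H2000} (and its $\F_{p^\ell}$-version in \cite{CCS2}) to an arbitrary field $\F$ and an arbitrary $\F_0$-subspace $W \le \F$. Case (i) is low-dimensional and will be dispatched by exhibiting, for each of the two possible $\F_0$-subspaces $W \in \{\{0\}, \F_2\}$, an explicit regular subgroup of order $8$ in $\AGL_3(\F_2)$ with the prescribed translation part; Heged\H{u}s' example already produces the one for $W = \{0\}$, and a minor modification handles $W = \F_2$.

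For cases (ii) and (iii) the main step is an explicit matrix construction. Decompose $\F^n = V \oplus \F$ with $V = \F^{n-1}$ and equip $V$ with a non-degenerate quadratic form $Q$ admitting an anisotropic vector $c$; the hypotheses on $n$ and $\char \F$ guarantee that such $Q$ and $c$ exist and that $\Or(Q)$ contains a subgroup parametrised by $y \in V$ through a map $y \mapsto \sigma(y)$ built from reflections relative to $Q$. Attached to this data, I would assign to each affine point $v = (y, x) \in V \oplus \F$ a block-upper-triangular matrix $M(v) \in \AGL_n(\F)$ whose first row is $(1, v)$, whose bottom-right $(n-1) \times (n-1)$ block is $\sigma(y)$, and whose remaining off-diagonal entry carries a contribution of the shape $\lambda(y)\, c + f(x)$, where $\lambda : V \to \F$ is the scalar correction forced by $Q$ and $f : \F \to V$ is an $\F_0$-linear map with $\ker f = W$ (so $f \equiv 0$ when $W = \F$).

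The crucial step is to verify that $R_W := \{M(v) : v \in \F^n\}$ is closed under matrix multiplication. The product $M(v) M(v')$ rewrites as $M(v'')$ for a uniquely determined $v''$, provided a collection of quadratic-form identities hold; most notably, $\sigma(y) \sigma(y')$ must equal $\sigma(y'')$ for the correct $y''$. These identities are exactly the ones Heged\H{u}s' construction was designed to satisfy, and they are algebraic in $Q$ and its polar form, so they carry over to any field. The $\F_0$-linear twist $f$ enters the product only through the identity $f(x) + f(x') = f(x + x')$, so it does not disturb closure. Once closure is shown, $|R_W| = |\F|^n$ and the first rows of its elements exhaust all affine points, giving regularity. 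Finally, $M(v) \in \Tr$ forces $\sigma(y) = I$ and $\lambda(y) = 0$, hence $y = 0$ by non-degeneracy, and then $f(x) = 0$ gives $x \in W$, so $R_W \cap \Tr \cong (W,+)$.

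The main obstacle is the verification of the quadratic-form identities that make $R_W$ a subgroup: this is the technical heart of Heged\H{u}s' idea, and the reason the hypothesis $n \ge 5$ is required in case (iii) is precisely that in characteristic $2$ the polar form is alternating and one needs extra room inside $V$ to accommodate the construction. Incorporating $W$ via the $\F_0$-linear twist $f$ is then a routine extension, since $f$ is additive and only modifies an off-diagonal entry.
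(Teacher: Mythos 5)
Your overall strategy (a Heged\H{u}s-type construction: quadratic form on a complement, a one-parameter family in the orthogonal group, and an $\F_0$-linear kernel device for $W$) is the paper's strategy, but the plan as written has a genuine gap in case (iii). You fix the decomposition $\F^n=\F^{n-1}\oplus\F$ in all cases and require a non-degenerate quadratic form $Q$ on $V=\F^{n-1}$. When $\char\F=2$ and $n$ is \emph{even}, $V$ is odd-dimensional and the polar form $J$ of any quadratic form, being alternating, is necessarily degenerate there; a nonzero radical vector $v_0$ (with $Jv_0^{\T}=0$) kills exactly the step where you conclude ``$\sigma(y)=I$ and $\lambda(y)=0$, hence $y=0$ by non-degeneracy'': non-degeneracy of $Q$ does not give non-degeneracy of $J$ in this situation, and the construction acquires unwanted translations supported on the radical. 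This failure occurs for \emph{every} even $n$, so your explanation of the hypothesis $n\ge 5$ (``extra room inside $V$'') does not address it. The paper's fix is structural, not quantitative: for $n$ even in characteristic $2$ it changes the decomposition to $(m,k)=(n-2,2)$, takes the hyperbolic form on the even-dimensional space $\F^{n-2}$, and embeds the \emph{two}-parameter group $(\F^2,+)$ into $\Or_{n-2}(\F,Q)$ via $\varphi(a,b)=I_{2t}+a(E_{1,t}+E_{2t,t+1})+b(E_{1,2t}+E_{t,t+1})+abE_{1,t+1}$ (Example \ref{ex3}); this two-parameter embedding is the new technical ingredient (the paper notes that precisely here the inductive argument of \cite{CCS2} had a gap at $\AGL_6(2^\ell)$), and your proposal has no counterpart to it.

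Two further points would not survive verification as stated. First, the orthogonal input is misidentified: what is needed is a homomorphic embedding of $(\F^k,+)$, $k\in\{1,2\}$, as a unipotent subgroup of $\Or_m(\F,Q)$ (e.g.\ $\varphi(a)=I_{n-1}+2aE_{2,1}+a^2E_{3,1}+aE_{3,2}$ for $\char\F\neq 2$), and in the resulting regular subgroup the orthogonal block of the element with first row $(1,v,a)$ is $\varphi(a-Q(v)d)$ --- it is parametrised by the \emph{scalar} coordinate(s), not by $y\in V$. A family $\sigma(y)$ indexed by all of $V$ satisfying the twisted closure identity $\sigma(y)\sigma(y')=\sigma(y'')$ does not exist in general, and ``built from reflections'' cannot produce an additive family, since reflections are involutions of determinant $-1$. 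Second, your mechanism for $W$ is not closure-neutral where you place it: the last coordinate of a product picks up the bilinear correction $vJ{v'}^{\T}d$, so an entry of the form $\lambda(y)c+f(x)$ would have to satisfy $f(x+x'+vJ{v'}^{\T}d)=(\text{twisted})\,f(x)+f(x')$, and since $f$ is only $\F_0$-linear while the correction is $\F$-bilinear, the term $f(vJ{v'}^{\T}d)$ cannot be absorbed; calling this ``routine'' hides a real obstruction. The paper's mechanism avoids it entirely by building the kernel into the homomorphism itself: $\varphi=\psi\varpi$, where $\varpi$ is the $\F_0$-linear projection of $\F^k$ onto a complement $U$ of $W$, so $\varphi$ remains a homomorphism with $\Ker(\varphi)=W$, closure is automatic, and Lemma \ref{lem1}(c) gives $R_W\cap\Tr\cong(W,+)$ directly.
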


Generalizations of this result to other additive subgroups $W$ of $\F^n$ can easily be
obtained taking, for example, direct products of affine groups or exploiting the flexibility of our Lemma \ref{lem1}.

We point out that the present paper provides the first known examples of regular subgroups of $\AGL_n(\F)$, $n\geq 4$, 
intersecting trivially the subgroup $\Tr$ in the case of infinite fields $\F$.

Combining Theorems \ref{negm} and  \ref{main}  we have the following characterization.

\begin{corollary}
Let $\F$ be a field of positive characteristic. Then the affine group $\AGL_n(\F)$ contains regular subgroups intersecting trivially $\Tr$ if and only if one of the 
following occurs:
\begin{itemize}
 \item[\emph{(i)}] $n=3$ and $|\F|=2$;
 \item[\emph{(ii)}] $n=4$ and $\char \F> 2$;
 \item[\emph{(iii)}] $n\geq 5$.
\end{itemize}
\end{corollary}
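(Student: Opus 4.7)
The corollary is essentially a bookkeeping combination of Theorems \ref{negm} and \ref{main}, and I would prove it by handling the two implications separately. A preliminary observation is that, because $\char \F > 0$ throughout the statement, the hypothesis ``$R$ unipotent if $\char \F = 0$'' appearing in Theorem \ref{negm} is automatically satisfied, so that theorem may be invoked without restriction on the regular subgroup $R$.

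For the ``if'' direction, the plan is to apply Theorem \ref{main} with $W = \{0\}$, which yields a regular subgroup $R_{\{0\}}$ with $R_{\{0\}} \cap \Tr = \{I_{n+1}\}$. It then suffices to verify that each of the three cases listed in the corollary falls under one of the three sufficient conditions of Theorem \ref{main}: case (i) of the corollary matches (i) of Theorem \ref{main} verbatim; case (ii) of the corollary ($n=4$, $\char \F > 2$) is a subcase of (ii) of Theorem \ref{main}, since $\char \F > 2$ in particular means $\char \F \ne 2$; and case (iii) ($n \geq 5$) is covered by (ii) of Theorem \ref{main} when $\char \F \ne 2$ and by (iii) when $\char \F = 2$.

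For the ``only if'' direction, I would argue by contrapositive. Suppose none of conditions (i)--(iii) of the corollary holds, while still $\char \F > 0$. A short enumeration forces at least one of $n \leq 2$, or $n = 3$ with $\F \ne \F_2$, or $n = 4$ with $\char \F = 2$. In each of these configurations Theorem \ref{negm} guarantees that every regular subgroup $R$ of $\AGL_n(\F)$ contains a nontrivial translation, so no such $R$ can meet $\Tr$ trivially. Since both theorems are invoked as black boxes, the only step requiring genuine attention is this case-analysis; no mathematical obstacle beyond verifying that the case distinctions of the corollary and of the two source theorems dovetail arises.
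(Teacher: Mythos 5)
Your proof is correct and matches the paper's approach exactly: the paper derives this corollary simply by ``combining Theorems \ref{negm} and \ref{main}'' (with $W=\{0\}$ for existence), and your case analysis, including the observation that the unipotence hypothesis of Theorem \ref{negm} is vacuous when $\char \F>0$, is precisely the bookkeeping the paper leaves implicit.
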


\begin{corollary}
Let $\F$ be a field of characteristic $0$. Then the affine group $\AGL_n(\F)$ contains unipotent regular subgroups intersecting trivially $\Tr$ if and only if $n\geq 4$.
\end{corollary}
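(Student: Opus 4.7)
The plan is to derive the corollary as an immediate specialization of Theorems \ref{negm} and \ref{main} to characteristic $0$, with a single small verification concerning unipotency.

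For the necessity direction, I would argue by contrapositive. Suppose $n\leq 3$ and $R$ is a unipotent regular subgroup of $\AGL_n(\F)$. Since $\char\F=0$ we automatically have $\F\neq \F_2$, and case (iii) of Theorem~\ref{negm} (which requires $\char\F=2$) is vacuous. Thus case (i) applies when $n\leq 2$, while case (ii) applies when $n=3$ because $\F\neq \F_2$. In either situation Theorem~\ref{negm} forces $R$ to contain a nontrivial translation, so $R\cap \Tr\neq \{I_{n+1}\}$. Hence any unipotent regular subgroup of $\AGL_n(\F)$ with trivial intersection with $\Tr$ must satisfy $n\geq 4$.

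For the sufficiency direction, suppose $n\geq 4$. Since $\char\F=0\neq 2$, case (ii) of Theorem~\ref{main} applies. Taking $W=\{0\}$, viewed as a subspace of $\F$ over its prime field $\F_0=\Q$, the theorem produces a regular subgroup $R_{\{0\}}$ of $\AGL_n(\F)$ satisfying $R_{\{0\}}\cap \Tr=\{I_{n+1}\}$. This is the required example.

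The only extra point to check, not explicitly contained in the statement of Theorem~\ref{main}, is that the subgroup $R_{\{0\}}$ produced there is actually unipotent, as the corollary demands. I expect this to follow directly by inspection of the Heged\H{u}s-style construction: the image $\pi(R_{\{0\}})\leq \GL_n(\F)$ is built from the additive group of a quadratic space sitting inside an orthogonal group via a parabolic embedding, so its elements are upper unitriangular in the basis produced by the construction. This bookkeeping verification, rather than any new structural argument, is the main — and essentially the only — step that goes beyond merely citing the two theorems.
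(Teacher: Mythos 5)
Your proposal matches the paper's (implicit) argument exactly: the paper offers no separate proof, presenting the corollary as a direct specialization of Theorem \ref{negm} (cases (i) and (ii), with $\F\neq\F_2$ automatic in characteristic $0$) and Theorem \ref{main} (case (ii) with $W=\{0\}$), and you correctly identify the one point left tacit, namely that the constructed subgroup is unipotent. One small correction to your final verification: in the relevant example $\varphi(a)=I_{n-1}+2aE_{2,1}+a^2E_{3,1}+aE_{3,2}$ is \emph{lower} unitriangular, so the elements of $R_{\{0\}}$ are not upper unitriangular in the basis of the construction; rather they are block upper triangular with unipotent diagonal blocks $1$, $\varphi(a)$, $I_k$, hence every element has all eigenvalues equal to $1$, which makes $R_{\{0\}}$ unipotent (and conjugate into $\U_{n+1}(\F)$ by \cite[17.5]{H}).
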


\section{Non existence results}\label{neg}

We recall that every abelian regular subgroup of $\AGL_n(\F)$ contains nontrivial translations, see 
\cite[Lemma 5.2]{Ischia}. 
Furthermore, by \cite[Theorem 3.2]{Ischia}, if $\char \F> 0$ any regular subgroup $R$ of $\AGL_n(\F)$ is unipotent. 
We will make repeated use of the fact that, up to conjugation under $\AGL_n(\F)$, any unipotent subgroup of $\AGL_n(\F)$ is contained
in the subgroup $\U_{n+1}(\F)$ of upper unitriangular matrices of $\GL_{n+1}(\F)$, see \cite[17.5]{H}.

\begin{lemma}\label{2}
Every unipotent regular subgroup $R$ of $\AGL_2(\F)$ contains nontrivial translations. 
\end{lemma}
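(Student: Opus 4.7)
The plan is to conjugate $R$ into the standard upper unitriangular subgroup, then split on whether $R$ is abelian or not, using that the commutator subgroup of $\U_3(\F)$ consists entirely of translations.

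Since $R$ is unipotent, by the result cited just before the lemma we may conjugate $R$ by an element of $\AGL_2(\F)$ so that $R\le \U_3(\F)$. This conjugation preserves $\Tr$ because $\Tr=\Ker\pi$ is normal in $\AGL_n(\F)$, so it suffices to prove the statement under this additional assumption. If $R$ is abelian, the result is immediate from \cite[Lemma 5.2]{Ischia}, so I may assume $R$ is non-abelian.

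Next I would write a generic element of $\U_3(\F)$ as
\[
M(a,b,c)=\begin{pmatrix} 1 & a & b \\ 0 & 1 & c \\ 0 & 0 & 1 \end{pmatrix},
\]
noting that $M\in \Tr$ precisely when its $(2,3)$-entry $c$ vanishes. A direct computation gives the commutator
\[
[M(a_1,b_1,c_1),\,M(a_2,b_2,c_2)] \;=\; M\bigl(0,\,a_1c_2-a_2c_1,\,0\bigr),
\]
which shows that the whole derived subgroup $[\U_3(\F),\U_3(\F)]$ lies inside $\Tr$. In particular $[R,R]\le \Tr$.

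The main (and only real) point is then to argue that $[R,R]\ne\{1\}$. Since $R$ was assumed non-abelian, this is automatic: some pair in $R$ has $a_1c_2-a_2c_1\ne 0$, yielding a nontrivial translation inside $R$. The only step that required a choice of approach was the abelian case, which is handed to us by \cite[Lemma 5.2]{Ischia}; without that reference one could instead use a counting/regularity argument (for finite $\F$, the regularity of $R$ forces $|R|=|\F|^2$, and an abelian unipotent $R\le \U_3(\F)$ would necessarily meet $Z(\U_3(\F))\subseteq \Tr$ nontrivially), but invoking the cited lemma is cleaner and covers the infinite case uniformly.
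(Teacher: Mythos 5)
Your proof is correct and takes essentially the same route as the paper: after conjugating into $\U_3(\F)$, your commutator computation showing $[\U_3(\F),\U_3(\F)]\le \Tr$ is precisely the paper's observation that $\pi(R)\le \U_2(\F)$ is abelian, and both arguments then dispose of the abelian case via \cite[Lemma 5.2]{Ischia}. The only cosmetic difference is framing: the paper argues contrapositively (if $R\cap\Tr=\{I_3\}$ then $R\cong\pi(R)$ is abelian, contradiction), whereas you split into abelian and non-abelian cases and exhibit the nontrivial translation explicitly.
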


\begin{proof}
If the claim is false, then $R\cong \pi(R)$  by \eqref{piR}. Now, $\pi(R)$ is a subgroup of $ \U_2(\F)$, which is abelian. So, $R$ would be abelian, whence the contradiction $R\cap \Tr\neq\{I_3\}$.
\end{proof}

In the following we denote by $E_{i,j}$ the elementary matrix having $1$ at position $(i,j)$, $0$ elsewhere. Also, $J_m=I_m+\sum_{i=1}^{m-1} E_{i,i+1}$ denotes the unipotent upper triangular Jordan block of size $m$.

\begin{proposition}\label{3}
The group $\AGL_3(\F)$ contains a unipotent regular subgroup $R$ such that 
$R\cap \Tr=\{I_{4}\}$ if and only if $|\F|=2$.
\end{proposition}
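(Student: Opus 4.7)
My plan handles the two implications separately.

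\emph{Sufficiency} ($|\F|=2$): I would exhibit by direct computation a unipotent subgroup $R \le \U_4(\F_2)$ of order $8$ whose first-row map is a bijection onto $\F_2^3$ and which meets $\Tr$ only in $I_4$; such an $R$ is a special case of Heged\H{u}s's construction \cite{H2000} for $p=2$, $n=3$, and its properties are easily verified since $|R|=8$.

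\emph{Necessity}: Suppose $R$ is a unipotent regular subgroup of $\AGL_3(\F)$ with $R \cap \Tr = \{I_4\}$. Up to conjugation $R \le \U_4(\F)$, so $\pi(R) \le \U_3(\F)$ and by \eqref{piR} the map $\pi|_R$ is an isomorphism onto $\pi(R)$. For each $v\in\F^3$, write the unique element of $R$ with first row $(1,v)$ as
\[
r(v) = \begin{pmatrix} 1 & v \\ 0 & M(v) \end{pmatrix},\qquad M(v) = I_3 + \alpha(v)E_{1,2} + \beta(v)E_{1,3} + \gamma(v)E_{2,3}.
\]
The composition law $r(v)r(w) = r(w+vM(w))$ forces $M(v)M(w) = M(w+vM(w))$, whose coordinates say that $\alpha, \gamma \colon R \to (\F,+)$ are group homomorphisms, while $\beta$ satisfies a twisted cocycle identity involving $\alpha(v)\gamma(w)$.

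The key rigidification step uses the centre of $R$. Since any abelian regular subgroup meets $\Tr$ non-trivially by \cite[Lemma 5.2]{Ischia}, $R$ must be non-abelian, so $Z(R)$ is a proper non-trivial subgroup projecting under $\pi$ into $Z(\U_3(\F)) = \{I_3 + xE_{1,3} : x\in\F\}$. Writing out the commutation of a central element of $R$ with an arbitrary $r(v)$ and extracting the first coordinate gives $v_1 = \lambda\,\gamma(v)$ for some $\lambda\in\F$, non-zero by injectivity of $M$.

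Feeding $v_1 = \lambda\gamma$ back into the cocycle identities and propagating to $v_2$ and $v_3$ by successive uses of the multiplication rule, I expect to locate a term in the identity for $v_3$ whose coefficient is $2\lambda$; its vanishing forces $\char \F = 2$. A further round of substitution in characteristic $2$ should then yield a polynomial identity of the shape $xy(x+1)(y+1) = 0$ that must hold for all $x,y \in \F$, which is possible only when $|\F| \le 2$, giving $\F = \F_2$. The principal obstacle is the careful bookkeeping in the cocycle manipulations, since the constraint-producing monomials only appear after multiple substitutions; the conceptual hinge is the centrality argument that turns $v_1$ from a generic additive function into an $\F$-linear multiple of $\gamma$.
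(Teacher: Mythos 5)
Your proposal is essentially sound and its skeleton matches the paper's proof of necessity: reduce to $R\le \U_{4}(\F)$, use \cite[Lemma 5.2]{Ischia} to force $R$ non-abelian, exploit a central element to rigidify the first coordinate against the $(2,3)$-entry $\gamma$, and then close with two commutator-driven contradictions (a coefficient $2$ forcing $\char \F=2$, then an additivity constraint forcing $|\F|=2$). Where you genuinely diverge is the rigidification step. The paper invokes \cite[Theorem 4.4]{PT} to place one of four Jordan forms in $\Z(R)$ and eliminates $J_4$, $\diag(J_3,J_1)$ and $\diag(J_2,J_1,J_1)$ case by case, landing on $\diag(J_2,J_2)$; you instead assert directly that $\pi(\Z(R))\le \Z(\U_3(\F))$, which shortcuts that case analysis and makes the argument self-contained. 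That containment is true but not automatic, and you must prove it: since $\pi|_R$ is an isomorphism, $\pi(\Z(R))=\Z(\pi(R))$, and for a non-abelian $H\le \U_3(\F)$ the commutator pairing on the abelianization coordinates $(\alpha,\gamma)$ is the standard symplectic form on $\F^2$; non-abelianness yields $u,u'$ in the image of $H$ with $\omega(u,u')\neq 0$, these are $\F$-linearly independent and span $\F^2$, so the annihilator of the image is trivial and $\Z(H)\le \Z(\U_3(\F))$. Within the same centrality computation you also need to rule out $w_1\neq 0$ for a central $r(w)$ (otherwise the first-coordinate equation forces $\alpha\equiv 0$, making $R$ abelian, a contradiction); with that, $v_1=\lambda\gamma(v)$ with $\lambda\neq 0$ drops out exactly as you say, and after conjugating by $\diag(1,1,1,\lambda)$ it coincides with the paper's normal form, whose matrices have $(3,4)$-entry $x_1$.

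Your endgame is stated as an expectation, and it does pan out, though not in the exact form you guess. Setting $t_x=r(0,0,x)$ one gets additive functions $\tilde a(x)=\alpha(0,0,x)$, $\tilde b(x)=\beta(0,0,x)$, with $\tilde a(x)\neq 0$ for $x\neq 0$ (else a commutator with $r_{(1,0,0)}$ produces a translation, forcing $t_x\in\Tr$); then $[r_{(\tilde a(x)^{-1},0,0)},t_x]$ times the central element $z^{\pm 1}$ is a translation lying in $R$ whose $E_{1,3}$-coefficient is $\pm 2$, giving $\char\F=2$ --- this is your ``$2\lambda$'' term. In characteristic $2$ the same identity forces $\tilde b(x)=1$ for all $x\neq 0$, and the contradiction for $|\F|>2$ is $1+1=\tilde b(1)+\tilde b(w)=\tilde b(1+w)=1$ for $w\neq 0,1$: an additivity violation for a function constant off zero, not your quartic identity $xy(x+1)(y+1)=0$, though the effect (impossible once $|\F|>2$) is the same. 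For sufficiency the paper simply writes down generators, $R=\langle I_4+E_{1,2}+E_{2,3}+E_{3,4},\; I_4+E_{1,4}+E_{2,3}+E_{2,4}\rangle$, which is exactly the order-$8$ verification you defer to Heged\H{u}s; you should exhibit the matrices rather than cite the construction, but nothing is at risk there. In sum: same architecture and same two final contradictions as the paper, with a genuinely different (and arguably cleaner, citation-free) derivation of the central normal form, at the cost of two steps you still owe proofs for.
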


\begin{proof}
Suppose that $R\cap \Tr=\{I_4\}$.
By what observed at the beginning of this section  $R$ is not abelian.
Moreover $R\cong \pi(R)$ by \eqref{piR}.
By \cite[Theorem 4.4]{PT}, up to conjugation under $\AGL_3(\F)$, we may suppose that $\Z(R)$ contains an element $\tilde z$ which is one of the following Jordan forms:
$J_4$, $\diag(J_3,J_1)$, $\diag(J_2,J_2)$ or $\diag(J_2,J_1,J_1)$.
Since $\C_{\AGL_3(\F)}(J_4)$ is abelian, $\tilde z\neq J_4$. Similarly, every unipotent subgroup of $\pi(\C_{\AGL_3(\F)}(\diag(J_3,J_1)))$ is abelian: so, $\tilde z\neq \diag(J_3,J_1)$.
Clearly $\tilde z$ cannot be the translation $\diag(J_2,J_1,J_1)$.

Thus, we are left to consider the case where $\tilde z=\diag(J_2,J_2)$. Replacing $\tilde z$ with its conjugate by the permutation matrix corresponding to the transposition $(2,3)$, we may suppose
that 
$$z=\begin{pmatrix}
1 & 0 & 1 & 0  \\ 0 & 1 & 0 & 1 \\ 0 & 0 & 1 & 0 \\ 0 & 0& 0 & 1
  \end{pmatrix}\in \Z(R).
$$
In this way, $\C_{\AGL_3(\F)}(z)$ is upper triangular, and in particular
$R\leq \C_{\AGL_3(\F)}(z)$ consists of matrices of shape
$$r_{(x_1,x_2,x_3)}=\begin{pmatrix}
1 & x_1 & x_2 & x_3 \\
0 & 1 & a(X) & b(X)\\
0 & 0 & 1 & x_1\\
0 & 0 & 0 & 1
\end{pmatrix},\qquad X=(x_1,x_2,x_3).
$$
Now, for all $x\in \F$ set $t_{x}=r_{(0,0,x)}$ and call
$\tilde a(x)=a(0,0,x)$ and $\tilde b(x)=b(0,0,x)$.
It is easy to see that $t_{x}t_{y}=t_{x+y}$, which implies that $\tilde b(x+y)=\tilde b(x)+\tilde b(y)$ for all $x,y\in\F$.
Suppose that $\tilde a(x)=0$ for some $x\in \F$. Then $[r_{(1,0,0)},t_x]=I_4+\tilde b(x) E_{1,4}\in R \cap \Tr$, which implies $\tilde b(x)=0$. In particular, $t_x\in \Tr$ and so $x=0$. Hence, for all $x\neq 0$, we have $\tilde a(x)\neq 0$ and we can consider the element $r_{(\tilde a(x)^{-1},0,0)}$. We have 
$$[r_{(\tilde a(x)^{-1},0,0)},t_x] z = I_4+2 E_{1,3}+ \frac{\tilde b(x)+1}{\tilde a(x)}E_{1,4}.$$
If $\char \F\neq 2$, this element is a nontrivial translation and our claim is proved.
So, assume $\char \F=2$. Then $\tilde b(x)=1$ for all $x\neq 0$.

If $|\F|>2$,  we can take an element $w\in \F$ such that $w\neq 0,1$.
From the additivity of $\tilde b(x)$ we obtain the contradiction 
$1+1=\tilde b(1)+\tilde b(w)=\tilde b(1+w)=1$.
Finally, let $\F=\F_2$. The reader can verify that 
$$R=\langle  (I_4+E_{1,2}+E_{2,3}+E_{3,4}) , (I_4+E_{1,4}+E_{2,3}+E_{2,4}) \rangle$$
is a regular 
subgroup of $\AGL_3(2)$ such that $R\cap \Tr=\{I_4\}$.
\end{proof}

\begin{proposition}\label{4}
Let $\F$ be a field of characteristic $2$. Then every regular subgroup $R$ of $\AGL_4(\F)$ contains nontrivial translations.
\end{proposition}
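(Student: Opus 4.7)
The plan is to argue by contradiction: assume $R\cap \Tr = \{I_5\}$, so that $R \cong \pi(R)$ by \eqref{piR}. Since $\char \F = 2$, the regular subgroup $R$ is unipotent, and up to $\AGL_4(\F)$-conjugation we may take $R \le \U_5(\F)$. Moreover $R$ is not abelian (abelian regular subgroups contain non-trivial translations, as recalled at the beginning of this section), so $\Z(R)$ is a proper non-trivial subgroup of $R$.

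I would then invoke \cite[Theorem 4.4]{PT} to find a canonical central element $\tilde z \in \Z(R)$ whose Jordan form, after $\AGL_4(\F)$-conjugation, is one of
\[
J_5,\ \diag(J_4,J_1),\ \diag(J_3,J_2),\ \diag(J_3,J_1,J_1),\ \diag(J_2,J_2,J_1),\ \diag(J_2,J_1,J_1,J_1).
\]
The last representative is a non-trivial translation and is excluded by hypothesis; the first two give an abelian unipotent part of $\pi\bigl(\C_{\AGL_4(\F)}(\tilde z)\bigr)$, by the same type of argument already used in Proposition \ref{3}, and so would force the contradiction that $R$ is abelian.

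The remaining cases $\tilde z \sim \diag(J_3,J_2)$, $\diag(J_3,J_1,J_1)$, $\diag(J_2,J_2,J_1)$ are to be handled by a direct analysis modelled on the last case of Proposition \ref{3}. For each, I would fix a concrete representative of $\tilde z$ (after possibly permuting basis vectors to place its blocks in convenient positions), so that $\C_{\AGL_4(\F)}(\tilde z)$ is upper triangular; then write a general $r \in R$ as a matrix whose first row $(1,x_1,x_2,x_3,x_4)$ parametrizes $R$ by regularity, with the remaining entries determined by functions $a(X), b(X), \ldots$ of $X = (x_1,x_2,x_3,x_4)$ constrained by the centralizer equations. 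Substituting $X$ with a single non-zero coordinate yields one-parameter subgroups $t_x$ along each coordinate axis, forcing the associated coordinate functions to be additive in $x$, and finally commutators of the shape $[r_{(\ldots)},t_x]\cdot\tilde z^{\epsilon}$ must lie in $R \cap \Tr = \{I_5\}$. Exhibiting a non-zero entry in the top row of each such commutator will yield the desired contradiction.

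The main obstacle is the case $\tilde z \sim \diag(J_2,J_2,J_1)$, whose centralizer has the most free parameters, and here the hypothesis $\char \F = 2$ is essential. In odd characteristic Heged\H{u}s' construction (recovered in Theorem \ref{main}(ii)) produces actual translation-free regular subgroups, so any would-be contradiction must exploit characteristic $2$; in particular, the easy commutator argument of Proposition \ref{3} (where the coefficient of $E_{1,3}$ was $2$ and hence invertible in odd characteristic) is not available here. Instead one must use a second commutator involving the extra $4$-th coordinate available when $n=4$ to rule out even the exceptional field $\F_2$ that did permit a solution in the $n=3$ case. Carrying out this characteristic-$2$-specific commutator computation, together with verifying that the other two intermediate cases do reduce to abelian centralizers modulo $\Tr$, is where the bulk of the work will lie.
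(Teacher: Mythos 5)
Your skeleton coincides with the paper's: assume $R\cap\Tr=\{I_5\}$, use unipotency in characteristic $2$, invoke \cite[Theorem 4.4]{PT} to get a central element with one of the six listed Jordan forms, and eliminate the cases one by one. But the proposal stops short exactly where the content of the proposition lies. Two of the cases you defer to ``direct analysis'' need no analysis at all: in characteristic $2$ one has $\diag(J_3,J_2)^2=\diag(J_3,J_1,J_1)^2=I_5+E_{1,3}$, a nontrivial translation lying in $R$ — an immediate contradiction (and $J_5$ dies just as fast, since $R\cong\pi(R)\leq\U_4(\F)$ forces $r^4=I_5$ while $J_5^4=I_5+E_{1,5}$; your abelian-centralizer argument for $J_5$ also works). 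A smaller technical slip: for $\tilde z\sim\diag(J_2,J_2,J_1)$ the affine centralizer $\C_{\AGL_4(\F)}(\tilde z)$ is \emph{not} upper triangular (it contains below-diagonal entries), unlike the $n=3$ situation you are modelling on; to reach the upper-triangular shape \eqref{r1} one must additionally use triangularizability of the unipotent group $R$ itself and a further conjugation normalizing a chosen element $s$.

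The genuine gap is that the case $\tilde z\sim\diag(J_2,J_2,J_1)$ — the heart of the proposition, as you yourself note — is left as a template rather than an argument. The paper's proof requires a dichotomy your sketch does not foresee: if the coordinate function $\alpha_1$ vanishes identically, then the stabilizer in $R$ of $V_0=\langle e_0,e_1,e_3,e_4\rangle$ induces a translation-free regular subgroup of $\AGL_3(\F)$, so Proposition \ref{3} forces $\F=\F_2$, which must then be excluded by a separate finite computation (the four conditions $(\nu_2,\nu_3)\neq(0,0),(0,1),(1,1),(1,0)$ exhausting $\F_2^2$); otherwise an element $u$ with $\gamma_1\neq0$ exists, and a specific chain of commutator identities ($\beta_4=\beta_1\gamma_4/\gamma_1$, then $\gamma_4=\gamma_5=0$, then $\alpha_4\equiv\alpha_5\equiv0$) is needed to reduce all of $R$ to the shape \eqref{R}, where finally the additivity of $\tilde\alpha_3$ yields the contradiction $1+1=\tilde\alpha_3(1,1)=1$. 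Your sketch correctly anticipates the ingredients — one-parameter subgroups, additivity, commutators forced into $R\cap\Tr=\{I_5\}$, and a characteristic-$2$-specific mechanism replacing the invertible coefficient $2$ of the $n=3$ case — but it supplies neither the dichotomy nor the reduction, so the decisive case is asserted, not proved.
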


\begin{proof}
By what observed at the beginning of this section, $R$ is unipotent since $\char \F=2$. By \cite[Theorem 4.4]{PT}, up to conjugation in $\AGL_4(\F)$, we may suppose that  $\Z(R)$ contains an element $\tilde z$ which is one of the following Jordan forms:
$J_5$, $\diag(J_4,J_1)$, $\diag(J_3,J_2)$, $\diag(J_3,J_1,J_1)$, $\diag(J_2,J_2,J_1)$ or $\diag(J_2,J_1,J_1,J_1)$.
Suppose that $R\cap \Tr=\{I_5\}$, whence $R\cong \pi(R)$. It follows $r^4=I_5$ for all $r\in R$, which gives $\tilde z\neq J_5$. Furthermore, $\tilde z\not \in \{\diag(J_3,J_2),\diag(J_3,J_1,J_1),\diag(J_2,J_1,$ $J_1,J_1)\}$, since
$\diag(J_3,J_2)^2=\diag(J_3,J_1,J_1)^2=I_5+E_{1,3}\in \Tr$ and  $\diag(J_2,J_1,$ $J_1,J_1)\in \Tr$.
Now, if $\tilde z=\diag(J_4,J_1)$, then every unipotent subgroup of  $\pi(\C_{\AGL_4(\F)}(\tilde z))$ is abelian, which implies that $R$ is abelian, an absurd. We are left to consider the case $\tilde z=\diag(J_2,J_2,J_1)$. 
Using the fact that $R$ is unipotent we obtain that
$$R\leq \left\{\begin{pmatrix}
1 & \rho_1 & \rho_2 & \rho_3 & \rho_4\\
0 & 1 & 0 & \rho_2 & 0 \\
0 & \rho_5 & 1 & \rho_6 & \rho_7\\
0 & 0 & 0 & 1 & 0\\
0 & \rho_8 & 0 & \rho_9 & 1\end{pmatrix}: \rho_i\in \F\right\}.
$$
Conjugating by the permutation matrix corresponding to $(2,4,5,3)$ we may suppose that
$z=I_5+E_{1,4}+E_{2,5}\in \Z(R)$ and write any element $r$ of $R$ as
\begin{equation}\label{r1}
r=\begin{pmatrix}
1 & x_1 & x_2 & x_3 & x_4\\
0 & 1 & \alpha_1(X) & \alpha_2(X) & \alpha_3(X)\\
0 & 0 & 1 & \alpha_4(X) & \alpha_5(X)\\
0 & 0 & 0 & 1 & x_1\\
0 & 0 & 0 & 0 & 1
        \end{pmatrix},\qquad X=(x_1,x_2,x_3,x_4).
\end{equation}
So, $R\cong \pi(R)\leq \U_4(\F)$, which implies that $[[r_1,r_2],r_3]\in \Z(R)$ for all $r_1,r_2,r_3\in R$.

It will be convenient to call $s$ the element of $R$ having $(1,1,0,0,0)$ as first row. Since $g=I_5+\alpha_5(1,0,0,0) E_{3,4}$ centralizes $z$, the elements of $R^g$ still have shape \eqref{r1} and $g^{-1}sg$ still has $(1,1,0,0,0)$ as first row, but with the component of position $(3,5)$ equal to $0$.
So, up to conjugation by $g$, we may suppose that
\begin{equation}\label{s}
 z=\begin{pmatrix}
1 & 0 & 0 & 1 & 0\\
0 & 1 & 0 & 0 & 1\\
0 & 0 & 1 & 0 & 0\\
0 & 0 & 0 & 1 & 0\\
0 & 0 & 0 & 0 & 1
 \end{pmatrix}\in \Z(R), \qquad
s=\begin{pmatrix}
1 & 1 & 0 & 0 & 0\\
0 & 1 & \beta_1 & \beta_2 & \beta_3\\
0 & 0 & 1 & \beta_4 & 0\\
0 & 0 & 0 & 1 & 1\\
0 & 0 & 0 & 0 & 1
 \end{pmatrix}.
 \end{equation}

Now, suppose  that $\alpha_1(X)=0$ for all $X \in \F^4$ (and in particular $\beta_1=0$).
In this case the stabilizer $R_0$ in $R$ of $V_0=\langle e_0,e_1,e_3,e_4\rangle $,
which consists of the matrices having $x_2=0$, induces a regular subgroup
$\tilde R$ of $\AGL_3(\F)$. In particular, since $R_0\cap \Tr=\{I_5\}$,  $\tilde R$ contains no translations other than the identity: by Proposition \ref{3} we must have $\F=\F_2$.
Actually, by \cite[Lemma 7.2]{LPS}, we may assume 
$\F\ne \F_2$ from the beginning. Nevertheless, to keep our proof independent, we exclude 
this possibility directly.
So, assume $\F=\F_2$ and take the two following elements of $R$:
$$v=\begin{pmatrix}
1 & 0 & 1 & 0 & 0\\
0 & 1 & 0 & \nu_2 & \nu_3\\
0 & 0 & 1 & \nu_4 & \nu_5\\
0 & 0 & 0 & 1 & 0\\
0 & 0 & 0 & 0 & 1
 \end{pmatrix},\quad t=\begin{pmatrix}
1 & 0 & 0 & 0 & 1\\
0 & 1 & 0 & \xi_2 & \xi_3\\
0 & 0 & 1 & \xi_4 & \xi_5\\
0 & 0 & 0 & 1 & 0\\
0 & 0 & 0 & 0 & 1
 \end{pmatrix}\in R.
 $$   
From $v^2=I_5+\nu_4 E_{1,4}+\nu_5 E_{1,5}$ and $[v,t]=I_5+\xi_4E_{1,4}+\xi_5E_{1,5}$
we obtain $\nu_4=\nu_5=\xi_4=\xi_5=0$.
Suppose that $\xi_2=0$. Then $[s,t]=I_5+\xi_3E_{1,5}\in \Tr$, which implies $\xi_3=0$ and then the absurd $t\in \Tr$. Hence, $\xi_2=1$ and $z[s,t]=I_5+(1+\xi_3)E_{1,5}$ gives $\xi_3=1$.
Now, from $v,vz,vt,vtz \not \in \Tr$ we get, respectively, $(\nu_2,\nu_3)
\neq (0,0),(0,1),(1,1),(1,0)$, an absurd.

This proves that there exists an element 
$$u=\begin{pmatrix}
1 & y_1 & y_2 & y_3 & y_4\\
0 & 1 & \gamma_1 & \gamma_2 & \gamma_3\\
0 & 0 & 1 & \gamma_4 & \gamma_5\\
0 & 0 & 0 & 1 & y_1\\
0 & 0 & 0 & 0 & 1
\end{pmatrix}\in R$$ having $\gamma_1\neq 0$. Taking $s$ as in \eqref{s}, from $s[[s,u],s]=[[s,u],s]s$, we get $\beta_4=\frac{\beta_1\gamma_4}{\gamma_1}$. 
Suppose $\gamma_4\neq 0$. In this case, $s^4=I_5+\beta_1\beta_4E_{1,5}=I_5+\frac{\beta_1^2\gamma_4}{\gamma_1}E_{1,5}$ implies $\beta_1=0$, whence the contradiction $[s,u]^2=I_5+\gamma_1\gamma_4E_{1,5}\neq I_5$.
So, $\gamma_4=\beta_4=0$, which implies that $[[s,u],u]=I_5+\gamma_1\gamma_5E_{1,5}$ and then $\gamma_5=0$. Now, taking any $r\in R$ (having shape \eqref{r1}), the condition
$s[[s,r],u]=[[s,r],u]s$ implies $\alpha_4(X)=0$ for all $X\in \F^4$. Hence,
$[[u,r],s]=I_5+\gamma_1\alpha_5(X)$ gives $\alpha_5(X)=0$ for all $X\in \F^4$.
So, 
every element $r$ of $R$ has shape
\begin{equation}\label{R}
r=\begin{pmatrix}
1 & x_1 & x_2 & x_3 & x_4\\
0 & 1 & \alpha_1(X) & \alpha_2(X) & \alpha_3(X)\\
0 & 0 & 1 & 0 & 0\\
0 & 0 & 0 & 1 & x_1\\
0 & 0 & 0 & 0 & 1
        \end{pmatrix},\qquad  X=X(x_1,x_2,x_3,x_4).
\end{equation}
Now, set $m_{x}=r_{(x,0,0,0)}$ and $t_{(x_2,x_4)}=r_{(0,x_2,0,x_4)}$ and call $\tilde \alpha_i(x_2,x_4)=\alpha_i(0,x_2,$ $0,x_4)$ for any $i=1,2,3$.
Note that $t_{(x_2,x_4)}t_{(y_2,y_4)}=t_{(x_2+x_4,y_2+y_4)}$, which implies $\tilde \alpha_i(x_2+y_2,x_4+y_4)=\tilde \alpha_i(x_2,x_4)\tilde \alpha_i(y_2,y_4)$ ($i=1,2,3$) for all $x_2,x_4,y_2,y_4\in \F$.
Suppose that $\tilde \alpha_2(x_2,x_4)=0$ for some $x_2,x_4\in \F$. Then
$[s,t_{(x_2,x_4)}]=I_5+ \tilde\alpha_1(x_2,x_4)E_{1,3}+\tilde \alpha_3(x_2,x_4) E_{1,5}\in \Tr$, where $s$ is as in \eqref{s} with $\beta_4=0$.
This implies $\tilde \alpha_1(x_2,x_4)=\tilde \alpha_3(x_2,x_4)=0$. In particular, we obtain that 
$t_{(x_2,x_4)}\in \Tr$, giving $x_2=x_4=0$. We conclude that, if $(x_2,x_4)\neq (0,0)$, then $\tilde \alpha_2(x_2,x_4)\neq 0$ and so we can consider
the element $m_{\tilde \alpha_2(x_2,x_4)^{-1}}$.
We have
$$z[m_{\tilde \alpha_2(x_2,x_4)^{-1}},t_{(x_2,x_4)} ]=I_5+\frac{\tilde \alpha_1(x_2,x_4)}{\tilde \alpha_2(x_2,x_4)}E_{1,3}+\frac{\tilde \alpha_3(x_2,x_4)+1}{\tilde \alpha_2(x_2,x_4)}E_{1,5},$$
which gives $\tilde \alpha_1(x_2,x_4)=0$ and $\tilde \alpha_3(x_2,x_4)=1$ for all $(x_2,x_4)\neq (0,0)$.
In particular, we have $1+1=\tilde \alpha_3(1,0)+\tilde \alpha_3(0,1)=\tilde \alpha_3(1,1)=1$, an absurd.
\end{proof}

\begin{proof}[Proof of Theorem \ref{negm}]
If $n=1$, then $\AGL_1(\F)=\Tr$. For $n\geq 2$ the statement follows from the fact that, if $\char\F >0$, then any regular subgroup of $\AGL_n(\F)$ is unipotent.
It suffices to apply Lemma \ref{2} and Propositions \ref{3} and \ref{4}.
\end{proof}

\section{Proof of Theorem \ref{main}}

In the following, the group of isometries of a quadratic form $Q$ on $\F^m$ will be denoted by
$\Or_m(\F,Q)$. Namely
$$\Or_m(\F,Q)=\{A \in \GL_m(\F): Q(vA)=Q(v), \;\textrm{for all } v \in \F^m\}.$$

\begin{lemma} \label{lem1}
Let $m,k\geq 1$ and  $d$ be a fixed row vector of $\F^k$.
Let $Q$ be a quadratic form on $\F^m$ with polar form $J$ and 
$\varphi: (\F^k,+)\rightarrow \Or_{m}(\F,Q)$ be a group homomorphism.
Then the following holds:
\begin{itemize}
\item[(\textrm{a})] The set $N=\left\{\begin{pmatrix}
1 & v & Q(v)d\\
0 & I_{m} & Jv^{\T}\otimes d\\
0 & 0 & I_k
\end{pmatrix}:  v\in \F^{m}\right\}$ is a subgroup of $\AGL_{m+k}(\F)$. 

\item[(\rm{b})] The set
$M=\left\{\begin{pmatrix}
1&0 &a\\
0&\varphi(a)&0\\
0&0&I_k
\end{pmatrix} : a\in \F^k\right\}$ is a subgroup of $\AGL_{m+k}(\F)$ that normalizes $N$;

\item[(\rm{c})] The group $R=M\ltimes N$ is a regular subgroup of 
 $\AGL_{m+k}(\F)$ and, if $Q$ is non-degenerate and $d\neq 0$,  we have $R\cap \Tr\cong \Ker(\varphi)$.
\end{itemize}
\end{lemma}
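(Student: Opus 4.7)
The plan is to verify the three assertions by direct block multiplication, relying on two algebraic ingredients: the polar-form identity $Q(v+w)=Q(v)+Q(w)+vJw^{\T}$, and the isometry relation $\varphi(a)J\varphi(a)^{\T}=J$ that expresses $\varphi(a)\in\Or_m(\F,Q)$. I will also use repeatedly the outer-product identity $v(Jw^{\T}\otimes d)=(vJw^{\T})d$, which holds because $Jw^{\T}\otimes d$ is the rank-one $m\times k$ matrix $(Jw^{\T})\cdot d$.

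For (a), I denote the generic element of $N$ by $n(v)$ and multiply $n(v)n(w)$ block-by-block. The $(1,2)$ and $(2,3)$ blocks are clearly $v+w$ and $J(v+w)^{\T}\otimes d$. The only substantive check is in the $(1,3)$ block, where the outer-product identity produces the cross term $(vJw^{\T})d$, so the total becomes $(Q(v)+vJw^{\T}+Q(w))d=Q(v+w)d$. Hence $n(v)n(w)=n(v+w)$ and $N\cong(\F^m,+)$.

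For (b), the identity $m(a)m(b)=m(a+b)$ is immediate from $\varphi(a)\varphi(b)=\varphi(a+b)$ and the additivity of the $(1,3)$ block. To see that $M$ normalizes $N$, I would compute $m(a)n(v)m(-a)$ directly: the $\pm a$ contributions in the $(1,3)$ block cancel, the $(1,2)$ block becomes $v\varphi(a)^{-1}$, and isometry gives $Q(v)=Q(v\varphi(a)^{-1})$. The delicate step is the $(2,3)$ block, where I would use the transposed isometry relation $\varphi(a)J=J\varphi(a)^{-\T}$ (obtained from $\varphi(a)J\varphi(a)^{\T}=J$) to rewrite $\varphi(a)Jv^{\T}\otimes d$ as $J(v\varphi(a)^{-1})^{\T}\otimes d$. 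This yields $m(a)n(v)m(-a)=n(v\varphi(a)^{-1})\in N$, so $M$ normalizes $N$.

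For (c), comparing $(1,2)$ blocks gives $M\cap N=\{I_{m+k+1}\}$, so $R=MN$ writes every element uniquely as $m(a)n(v)$; the first row is then $(1,\,v,\,Q(v)d+a)$. Given any $(u_1,u_2)\in\F^m\times\F^k$, the system $v=u_1$, $a=u_2-Q(u_1)d$ has a unique solution, which is exactly regularity of $R$. Finally, $m(a)n(v)\in\Tr$ iff its lower-right $(m+k)\times(m+k)$ block equals $I_{m+k}$, i.e.\ $\varphi(a)=I_m$ and $Jv^{\T}\otimes d=0$; since $d\neq 0$ and the non-degeneracy of $Q$ makes $J$ invertible, this forces $v=0$ and $a\in\Ker(\varphi)$, giving $R\cap\Tr\cong\Ker(\varphi)$. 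The one point requiring care is the transposed isometry identity in (b); everything else is routine block arithmetic.
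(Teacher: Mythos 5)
Your proof is correct and takes essentially the same route as the paper's: direct block multiplication using the polar identity $Q(v+w)=Q(v)+Q(w)+vJw^{\T}$ and the isometry relation $\varphi(a)J\varphi(a)^{\T}=J$ to establish (a) and the normalization in (b), then parametrizing the elements of $R=MN$ by their first rows $(1,v,Q(v)d+a)$ to get regularity and the identification $R\cap\Tr\cong\Ker(\varphi)$ in (c). The only cosmetic difference is the direction of conjugation in (b): you compute $m(a)n(v)m(-a)=n(v\varphi(a)^{-1})$, while the paper computes $m(a)^{-1}n(v)m(a)=n(v\varphi(a))$, which is the same verification.
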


\begin{proof}
Items (a) and the first part of item (b) follow from easy calculations. We just show that $M$ normalizes $N$:
$$\begin{pmatrix}
1&0&-a\\
0&\varphi(a)^{-1}&0\\
0&0&I_k
\end{pmatrix}\begin{pmatrix}
1&v&Q(v)d\\
0&I_{m}& J v^{\T}\otimes d\\
0&0&I_k
\end{pmatrix}\begin{pmatrix}
1&0 &a\\
0&\varphi(a)&0\\
0&0&I_k
\end{pmatrix}=$$
$$=\begin{pmatrix}
1&v\varphi(a)& Q(v)d\\
0&I_{m}&\varphi(a)^{-1}Jv^{\T}\otimes d\\
0&0&I_k
\end{pmatrix}= \begin{pmatrix}
1&v\varphi(a)& Q(v\varphi(a))d\\
0&I_{m} & J\left(v\varphi(a)\right)^{\T}\otimes d\\
0&0&I_k
\end{pmatrix}.$$
To prove (c) we first observe that
\begin{eqnarray*}
R & = & \left\{\begin{pmatrix}
1& v & a + Q(v)d\\
0&\varphi(a) &\varphi(a) J v^{\T}\otimes d\\
0&0&I_k
\end{pmatrix}:  v\in \F^{m}, a \in \F^k \right\}\\
& = &  \left\{\begin{pmatrix}
1& v & a \\
0&\varphi(a-Q(v)d)& \varphi(a-Q(v)d) Jv^{\T}\otimes d\\
0&0& I_k
\end{pmatrix}:  v\in \F^{m}, a \in \F^k \right\}.
\end{eqnarray*}
Now, let $r$ be the element of $R$ having $(1,v,a)$ as first row.
Then $r\in R \cap \Tr$ if and only if $\varphi(a-Q(v)d)=I_{m}$ and $Jv^{\T}\otimes d=0$.
If $Q$ is non-degenerate and $d\neq 0$, from $Jv^{\T}\otimes d=0$  we obtain $v=0$ and so $\varphi(a-Q(v)d)=\varphi(a)=I_{m}$ if and only if $a \in \Ker(\varphi)$.
\end{proof}

\begin{proof}[Proof of Theorem \ref{main}]
In Lemma \ref{lem1} take $(m,k)$ $=(n-2,2)$ if $n$ is even and $\char \F=2$, $(m,k)=(n-1,1)$ otherwise.
Then there exists a 
non-degenerate quadratic form $Q$ on $\F^{m}$ such that $\Or_{m}(\F,Q)$ contains
a subgroup isomorphic to $(\F^k,+)$ (see \cite[Chapter 11]{T} and the examples below). 
Let us fix  an embedding 
$$\psi:  (\F^k,+)\to \Or_{m}(\F,Q).$$ 
Choose any complement $U$ of $W$ in $\F^k$. Since 
$\F^k=U\oplus W$, each $a\in \F^k$ can be written in a unique way as 
$a=u_a+w_a$ with $u_a \in U$, $w_a \in W$. 
Calling $\varpi:\F^k\rightarrow \F^k$ the projection
onto $U$, defined by  $\varpi(a)=u_a$,  
the group homomorphism  $\varphi= \psi\varpi: (\F^k,+)\to \Or_{m}(\F,Q)$ has kernel $W$.
Taking $M$ and $N$ as in Lemma \ref{lem1} we have that  $R=M\ltimes N$ is a regular subgroup of 
$\AGL_n(\F)$ such that  $R\cap \Tr\cong (W,+)$.
\end{proof}

\begin{ex}
Assume $\char \F\ne 2$, $n\ge 4$ and consider the quadratic form $Q$ on $\F^{n-1}$ defined by 
$$Q(x_1, \dots, x_{n-1})=x_1x_3 - x_2^2+\frac{1}{2} \sum_{i=4}^{n-1} x_i^2.$$
The polar form of $Q$ has Gram matrix
$J=\diag(J_3,I_{n-4})\in \GL_{n-1}(\F)$, where   
$J_3=\begin{pmatrix}
0 & 0 & 1\\
0 & -2 & 0\\
1 & 0 & 0
\end{pmatrix}$. Then the application
$\varphi: \F\rightarrow \GL_{n-1}(\F)$ defined by 
$$\varphi(a)=I_{n-1} + 2a E_{2,1} + a^2 E_{3,1} + a E_{3,2}$$
is a 
monomorphism from $(\F, +)$ into $\Or_{n-1}(\F,Q)$. By Lemma \ref{lem1}  
\begin{equation}\label{subg}
R=\left\{\begin{pmatrix}
1& v & a +Q(v)\\
0&\varphi(a)&\varphi(a) J v^{\T}\\
0&0&1
\end{pmatrix}: v\in \F^{n-1}, a \in \F\right\}
\end{equation}
is a regular subgroup of $\AGL_n(\F)$ with no nontrivial translations.
\end{ex}

\begin{ex}
Assume  $\char \F=2$, $n=2t+1\ge 3$  and consider the quadratic form $Q$ on $\F^{2t}$ defined by 
\begin{equation}\label{Q}
Q(x_1, \dots,x_{2t})= \sum_{i=1}^t x_i x_{t+i}.
\end{equation}
The polar form of $Q$ has Gram matrix
$\left( \begin{smallmatrix} 0 & I_t\\ I_t & 0\end{smallmatrix} \right)$. 
If $n\geq 5$, the application $\varphi: \F\rightarrow 
\GL_{2t}(\F)$ defined by $\varphi(a)= I_{2t}+ a (E_{1,t}+E_{2t,t+1})$
is a monomorphism from $(\F, +)$ into $\Or_{2t}(\F,Q)$.
If $n=3$ and $\F=\F_2$ define  the monomorphism $\varphi: \F_2\rightarrow 
\Or_{2}(\F_2)$ by setting $\varphi(1)=E_{1,2}+E_{2,1}$.
With these definitions of $Q$ and $\varphi$, again by Lemma \ref{lem1} we obtain a regular subgroup of $\AGL_n(\F)$ 
of shape \eqref{subg}, intersecting trivially  $\Tr$.
\end{ex}

\begin{ex}\label{ex3}
Assume  $\char \F=2$, $n=2t+2\ge 6$  and consider the quadratic form $Q$ on $\F^{2t}$ defined by \eqref{Q}. The 
application $\varphi: \F^2\rightarrow 
\GL_{2t}(\F)$ defined by 
$$\varphi(a,b)=I_{2t}+ a (E_{1,t}+E_{2t,t+1})+b(E_{1,2t}+E_{t,t+1})+abE_{1,t+1}$$
is a monomorphism from $(\F^2, +)$ into $\Or_{2t}(\F,Q)$.
With these definitions of $Q$ and $\varphi$,  we obtain a regular subgroup of 
$\AGL_n(\F)$ 
of shape \eqref{subg}, intersecting  trivially $\Tr$.
\end{ex}

We conclude observing that direct products of regular subgroups  intersecting trivially $\Tr$ clearly give rise to 
regular subgroups with the same property. However, by point (iii) of Theorem \ref{negm} 
a regular subgroup of $\AGL_6(2^\ell)$, $\ell>1$, with no nontrivial translations  cannot be obtained as a direct 
product of regular subgroups of $\AGL_3(2^\ell)$, as done in \cite{CCS2}. So the inductive argument used there for 
$\AGL_{2m}(2^\ell)$, $2m\geq 6$, has a little gap in the basis of induction. But the claim is correct, actually in a stronger form, as confirmed by our Example \ref{ex3}.

\end{document}